\newtheorem{theorem}{Theorem}[section]
\newtheorem{corollary}{Corollary}[section]
\theoremstyle{definition}
\newtheorem{remark}{Remark}[section]
\numberwithin{equation}{section}
\begin{document}

\title[Prescribed energy periodic solutions of relativistic Kepler problems]{Prescribed energy periodic solutions \\of Kepler problems \\with relativistic corrections}

\author[A.~Boscaggin]{Alberto Boscaggin}

\address{
Department of Mathematics ``Giuseppe Peano'', University of Torino\\
Via Carlo Alberto 10, 10123 Torino, Italy}

\email{alberto.boscaggin@unito.it}

\author[W.~Dambrosio]{Walter Dambrosio}

\address{
Department of Mathematics ``Giuseppe Peano'', University of Torino\\
Via Carlo Alberto 10, 10123 Torino, Italy}

\email{walter.dambrosio@unito.it}

\author[G.~Feltrin]{Guglielmo Feltrin}

\address{
Department of Mathematics, Computer Science and Physics, University of Udine\\
Via delle Scienze 206, 33100 Udine, Italy}

\email{guglielmo.feltrin@uniud.it}

\thanks{Work written under the auspices of the Grup\-po Na\-zio\-na\-le per l'Anali\-si Ma\-te\-ma\-ti\-ca, la Pro\-ba\-bi\-li\-t\`{a} e le lo\-ro Appli\-ca\-zio\-ni (GNAMPA) of the Isti\-tu\-to Na\-zio\-na\-le di Al\-ta Ma\-te\-ma\-ti\-ca (INdAM).
\\
\textbf{Preprint -- March 2023}} 

\subjclass{34C25, 70H08, 70H12, 83A05, 83C10.}

\keywords{Relativistic Kepler problems, periodic solutions, bifurcation, nearly integrable Hamiltonian systems, isoenergetic KAM theory.}

\date{}

\dedicatory{}

\begin{abstract}
We consider two different relativistic versions of the Kepler problem in the plane: the first one involves the relativistic differential operator, the second one involves a correction for the usual gravitational potential due to Levi-Civita.
When a small external perturbation is added into such equations, we investigate the existence of periodic solutions with prescribed energy bifurcating from periodic invariant tori of the unperturbed problems. Our main tool is an abstract bifurcation theory from periodic manifolds developed by Weinstein, which is applied in the case of nearly integrable Hamiltonian systems satisfying the usual KAM isoenergetic non-degeneracy condition.
\end{abstract}

\maketitle

\section{Introduction}\label{section-1}

The Kepler problem of classical mechanics consists in the study of the motion of a non-attracting body of mass $m$ in the gravitational field of a second fixed body. According to Newton's law, it can thus be described by the equation
\begin{equation}\label{eq-kep}
m \ddot x = - \alpha \frac{x}{\vert x \vert^3}, \qquad x \in \mathbb{R}^d \setminus \{0\},
\end{equation}
where $\alpha = G M m$, with $G$ the gravitational constant and $M$ the mass of the second body (placed at the origin, for simplicity), $d\in\mathbb{N}\setminus\{0\}$, see for instance \cite{Co-03, Po-76}.

With the aim of including relativistic effects in the above problem, some variants of equation \eqref{eq-kep} have been proposed.
A first one, theoretically framed in the context of special relativity, comes from the use of the relativistic kinetic energy
\begin{equation*}
K(\dot x) = mc^2 \bigl{(} 1 - \sqrt{1-\vert \dot x \vert^2/c^2} \bigr{)}
\end{equation*}
instead of the usual one, where $c > 0$ is the speed of light (notice that
$K(\dot x) = \tfrac{1}{2} m \vert \dot x \vert^2 + o(c^{-2})$ for $c \to +\infty$); with no modifications for the Newtonian gravitational energy, the corresponding Euler--Lagrange equation thus reads as
\begin{equation}\label{eq-keprelintro}
\dfrac{\mathrm{d}}{\mathrm{d}t}\left(\dfrac{m\dot{x}}{\sqrt{1-\vert \dot{x}\vert ^2/c^2}}\right)
= -\alpha \, \dfrac{x}{\vert x\vert ^3}, \qquad x \in \mathbb{R}^d \setminus \{0\},
\end{equation}
see, among others, \cite{AnBa-71,Bo-04,Ji-13,LeMo-PP,MuPa-06}. A second relativistic variant of equation \eqref{eq-kep} arises, instead, by keeping the usual kinetic energy but imposing a correction for the gravitational potential: precisely
\begin{equation*}
V(x) = -\frac{\kappa(c)}{\vert x \vert} - \frac{\lambda(c)}{\vert x \vert^2},
\end{equation*}
where $\kappa(c)$ and $\lambda(c)$ are suitable positive constant with the property that $\kappa(c) \to \alpha$ and $\lambda(c) \to 0$ as $c \to +\infty$. 
The reason for this ad-hoc correction of the Newtonian potential, suggested at first by Levi-Civita \cite{LC-28}, lies in the fact that the corresponding equation
\begin{equation}\label{eq-lcintro}
m \ddot x = -\kappa(c) \dfrac{x}{\vert x\vert ^{3}} - 2\lambda(c) \dfrac{x}{\vert x\vert ^4}, \qquad x \in \mathbb{R}^d \setminus \{0\},
\end{equation}
turns out to be consistent with the theory of general relativity, see for instance \cite[Chapter~12.3~B]{GoPoSa-02}.


Besides being well recognized in the physics community, equations \eqref{eq-keprelintro} and \eqref{eq-lcintro} have attracted the attention of mathematicians from dynamical systems and nonlinear analysis: the main problem in this context is to understand which aspects of the unperturbed dynamics are preserved if some coefficient is allowed to vary with time and/or if some external force is assumed to act on the body in addition to the Newtonian gravity. Contributions in this direction dealing with equation \eqref{eq-lcintro} are more classical and can be found, among others, in \cite{AmBe-90, FoGa-17, LaLlNu-91, NuCaLl-91}. On the other hand, probably due to the nonlinear nature of the involved differential operator \cite{Ma-13}, the interest for equation \eqref{eq-keprelintro} seems to be more recent and we can quote the papers \cite{BoDaFe-21, BoDaPa-23, BoDaPa-pp, ToUrZa-13, Za-13}.

Following this line of research, in this paper, we consider, in dimension $d=2$, the following perturbed versions of the above equations,
\begin{equation}\label{eq-pert1}
\dfrac{\mathrm{d}}{\mathrm{d}t}\left(\dfrac{m\dot{x}}{\sqrt{1-\vert \dot{x}\vert ^2/c^2}}\right)
= -\alpha\, \dfrac{x}{\vert x\vert ^3} + \varepsilon \, \nabla U(x), \qquad x \in \mathbb{R}^2 \setminus \{0\},
\end{equation}
and
\begin{equation}\label{eq-pert2}
m \ddot x = -\kappa(c) \dfrac{x}{\vert x\vert ^{3}} - 2\lambda(c) \dfrac{x}{\vert x\vert ^4} + \varepsilon \, \nabla U(x), \qquad x \in \mathbb{R}^2 \setminus \{0\},
\end{equation}
where $\varepsilon$ is a small real parameter and $U$ is a $\mathcal{C}^{\infty}$ external potential.
In this setting, our interest is in investigating the existence of periodic solutions bifurcating from non-circular periodic solutions of the corresponding unperturbed problem ($\varepsilon = 0$). Let us notice that a strictly related problem was recently considered in \cite{BoDaFe-21, BoDaFe-pp}: however, while therein the bifurcating solutions were required to have \textit{prescribed period} $T > 0$, here we focus on the complementary, and more delicate, case of solutions with \emph{prescribed energy} $H \in \mathbb{R}$. This means that
\begin{equation*}
\frac{1}{2}m \vert \dot x(t) \vert^2 - \frac{\kappa(c)}{\vert x(t) \vert} - \frac{\lambda(c)}{\vert x(t) \vert^2} - \varepsilon \, U(x(t)) = H,
\end{equation*} 
when $x(t)$ is a solution of the more classical \eqref{eq-pert2}; on the other hand, in view of the Lagrangian structure of \eqref{eq-pert1}, it means that
\begin{equation*}
mc^2\left( \frac{1}{\sqrt{1-\vert \dot{x}\vert ^2/c^2}}-1\right)-\dfrac{\alpha}{\vert x\vert }-\varepsilon \, U(x) = H,
\end{equation*} 
whenever $x(t)$ solves \eqref{eq-pert1}. Notice that, in principle, the values of $H$ carrying non-circular periodic solutions are not known and must be characterized by phase-plane analysis arguments.

After having performed this preliminary step, the main tool for the proof of our result is an abstract bifurcation theory from periodic manifolds, developed by A.~Weinstein in a series of papers \cite{We-73, We-77, We-78}. Roughly speaking, this theory ensures that, whenever there is a non-degenerate manifold $\Sigma$ of periodic solutions with energy $H$ for a Hamiltonian system $\dot z = X_{\mathcal{K}_{0}} (z)$ on a symplectic manifold $M$,
then a finite number of periodic solutions, with energy $\mathcal{H}_\varepsilon = H$, exist for the system
\begin{equation*}
\dot z = X_{\mathcal{H}_\varepsilon}(z)
\end{equation*} 
whenever the perturbed Hamiltonian $\mathcal{H}_\varepsilon$ is close enough to the unperturbed one $\mathcal{K}_{0}$. We point out that the notion of non-degeneracy, in this fixed-energy context, is quite delicate and several non-equivalent notions have been considered in the literature, see Remark~\ref{rem-nondeg}.

In our paper, Weinstein's theory is combined with tools from Hamiltonian dynamics. Indeed, both equations \eqref{eq-keprelintro} and \eqref{eq-lcintro} can be regarded as Hamiltonian systems with two degrees of freedom $x$ and $p$, where the momentum $p$ is given by 
$p = m\dot x$ in the more classical case of equation \eqref{eq-lcintro}, while $p = m\dot x/\sqrt{1-\vert \dot x \vert^2/c^2}$ for equation 
\eqref{eq-keprelintro}. As a consequence of the invariances by time-translation and space-rotations, two independent integrals can be found: the energy and the angular momentum $x \wedge p$.
Thus, assuming that a periodic solution $x$ with energy $H$ is found, the periodic manifold $\Sigma$ for the application of the abstract bifurcation theory is
nothing but the two-dimensional torus containing $x$ together with all its time-translations and space-rotations. After passing to action-angle coordinates, the non-degeneracy condition becomes the standard non-degeneracy condition used in the isoenergetic version of KAM theory, see for instance \cite{Ar-89, BrHu-91}. By explicit computations, such a condition is easily checked to hold true both in the case of \eqref{eq-keprelintro} and \eqref{eq-lcintro}, thus concluding the proof.

\smallskip

The plan of the paper is the following. In Section~\ref{section-2}, we review Weinstein's theory, with a focus to the case of nearly integrable Hamiltonian systems, needed for our application. Then, in Section~\ref{sec-autonomo} we give the statement and the proof of our main results for equations \eqref{eq-pert1} and \eqref{eq-pert2}.

\section{Preliminaries}\label{section-2}

In this section, we state and prove a result ensuring, for a nearly integrable Hamiltonian system, bifurcation of prescribed energy periodic solutions from periodic tori of the integrable unperturbed problem. This result is obtained as a corollary of a general bifurcation theory from periodic manifolds by A.~Weinstein \cite{We-73, We-77, We-78}. For the reader's convenience, we first briefly review this theory in Section~\ref{sec2.1}, then in Section~\ref{sec2.2} we provide our result (Theorem~\ref{teo-weinstein}).

\subsection{An abstract bifurcation result by Weinstein}\label{sec2.1}

Let $(M,\omega)$ be a symplectic manifold. Given $\mathcal{K}_{0} \in \mathcal{C}^{\infty}(M)$ we consider the Hamiltonian system
\begin{equation}\label{hs}
\dot z = X_{\mathcal{K}_{0}}(z),
\end{equation} 
where as usual $X_{\mathcal{K}_{0}}$ denotes the Hamiltonian vector field associated to  $\mathcal{K}_{0}$ (that is, $\omega(X_{\mathcal{K}_{0}},Y) = \mathrm{d}\mathcal{K}_{0}(Y)$ for every vector field $Y$ on $M$). For every $\xi \in M$, the unique (local) solution of \eqref{hs} satisfying $z(0) = \xi$ will be denoted by 
$z(t;\xi)$; as well known, the value of 
the Hamiltonian $\mathcal{K}_{0}$ is constant along $z(t;\xi)$, that is
\begin{equation*}
\mathcal{K}_{0}(z(t;\xi)) = H, \quad \text{for every $t$,}
\end{equation*} 
where $H = \mathcal{K}_{0}(\xi)$.

We are interested in values $H$ such that the level set $\mathcal{K}_{0}^{-1}(H)$ contains non-constant periodic solutions of \eqref{hs}.
More precisely, let us define the set $\mathrm{Per}_{\mathcal{K}_{0}}^H$ of periodic points with energy $H$
as the set of couples $(\xi,\tau) \in M \times (0,+\infty)$ such that $z(t;\xi)$ is a non-constant $\tau$-periodic solution of \eqref{hs} with energy $H$, that is
\begin{equation*}
z(\tau;\xi) = \xi, \qquad \mathcal{K}_{0}(\xi) = H, \qquad X_{\mathcal{K}_{0}}(\xi) \neq 0.
\end{equation*}
Incidentally, notice that $\tau$ does not need to be the minimal period and, actually, 
if $(\xi,\tau)$ belongs to $\mathrm{Per}_{\mathcal{K}_{0}}^H$ then $(\xi,N\tau)$ belongs to $\mathrm{Per}_{\mathcal{K}_{0}}^H$ as well, for every integer $N \geq 1$. 
Moreover, of course, if $(\xi,\tau)$ belongs to $\mathrm{Per}_{\mathcal{K}_{0}}^H$ then the one-dimensional torus $\{(z(t;\xi),\tau) \colon t \in [0,\tau)\}$ belongs to $\mathrm{Per}_{\mathcal{K}_{0}}^H$ as well.

We say that a set $\Sigma \subset \mathrm{Per}_{\mathcal{K}_{0}}^H$ is a \textit{periodic manifold} if it satisfies the following conditions:
\begin{itemize}
\item[$(i)$] $\Sigma$ is a closed submanifold of $M \times \mathbb{R}$,
\item[$(ii)$] the restriction to $\Sigma$ of the projection $\pi \colon M \times \mathbb{R} \to M$ is an embedding.
\end{itemize}
Notice that, roughly, condition $(ii)$ ensures that for a periodic point $(\xi,\tau) \in \Sigma$, the period has to be uniquely chosen (thought not necessarily the minimal one and, in general, depending, smoothly, on $\xi$).

The crucial additional assumption to be imposed in order to ensure bifurcation of periodic solutions (with energy $H$) from the periodic manifold 
$\Sigma$ is a non-degeneracy condition. To introduce it, we consider, for any $(\xi,\tau) \in \Sigma$, the so-called monodromy operator at $\xi$, that is, the linear operator $P \colon T_\xi M \to T_\xi M$ given by
\begin{equation*}
P \eta =\partial_{\zeta} z(\tau; \zeta)\vert _{\zeta = \xi} \,\eta. 
\end{equation*}
It is well known that both $X_{\mathcal{K}_{0}}(\xi)$ and $T_\xi (\mathcal{K}_{0}^{-1}(H))$ are invariant under $P$; moreover, $X_H(\xi) \in T_\xi (\mathcal{K}_{0}^{-1}(H))$
(incidentally, let us observe that, since $X_{\mathcal{K}_{0}}(\xi) \neq 0$, the level set $\mathcal{K}_{0}^{-1}(H)$ is a submanifold around $\xi$ and thus the tangent space $T_\xi (\mathcal{K}_{0}^{-1}(H))$ is well-defined).
With this in mind, a periodic manifold $\Sigma$ is said to be \emph{non-degenerate}  if the following condition holds:
\begin{itemize}
\item[$(iii)$] for every $(\xi,\tau) \in \Sigma$ and for every $\eta \in T_\xi (\mathcal{K}_{0}^{-1}(H))$ it holds that
\begin{equation*}
\eta \in T_\xi \pi(\Sigma) \quad \Longleftrightarrow \quad \eta = P \eta + \lambda X_H(\xi), \quad \mbox{ for some } \lambda \in \mathbb{R},
\end{equation*}
\end{itemize}
(cf.~\cite[Lemma~1.1]{We-73}).

\begin{remark}\label{rem-nondeg}
The above non-degeneracy condition might seem quite mysterious at first sight, since in principle one could expect 
the eigenspace $\mathcal{E'} \subset T_\xi (\mathcal{K}_{0}^{-1}(H)) $ of fixed points of $P$ to play a role.
The reason why the larger space 
\begin{equation*}
\mathcal{E} =  \bigl{\{} \eta \in T_\xi (\mathcal{K}_{0}^{-1}(H)) \colon \eta = P \eta + \lambda X_H(\xi), \mbox{ for some } \lambda \in \mathbb{R} \bigr{\}} \supset \mathcal{E}' 
\end{equation*}
has to be considered lies in the fact that we are dealing with periodic solutions with \emph{unprescribed period}.
Indeed, if $(\xi(s),\tau(s))$ is a smooth curve in $\Sigma$ with $(\xi(0),\tau(0)) = (\xi,\tau)$, then differentiating with respect to $s$ and taking $s = 0$ yields
\begin{equation*}
\xi'(0) = P \xi'(0) + \tau'(0)X_H(\xi(0)),
\end{equation*} 
where, in general, $\tau'(0)$ does not need to be zero. 
This shows that 
\begin{equation*}
T_\xi \pi(\Sigma) \subset \mathcal{E}
\end{equation*}
and thus the non-degeneracy conditions amounts in requiring that all vectors in $\mathcal{E}$ belongs to $T_\xi \pi(\Sigma)$.
We stress that, on the other hand, the fact that periodic solutions are required to have \emph{prescribed energy}
leads to consider the monodromy operator $P$ not on the whole $T_\xi M$, but on its invariant subspace $T_\xi (\mathcal{K}_{0}^{-1}(H))$.

We point out, however, that other non-degeneracy conditions for the periodic problem with prescribed energy have been considered in the literature. For instance, in \cite{We-77} the additional condition that the algebraic multiplicity of $1$ as an eigenvalue of $P$ is equal to the dimension of $\Sigma$ is imposed. As we will see in Remark~\ref{rem-nondeg2}, this stronger assumption is not satisfied in the applications which we are going to develop in Section~\ref{sec2.2}.
\hfill$\lhd$
\end{remark}

We are now in a position to state the result about bifurcation of periodic solutions (with prescribed energy) from a non-degenerate periodic manifold of system \eqref{hs}; it corresponds to \cite[Theorem~1.4]{We-73} in the case of the action of the group $\mathbb{Z}_{n}$ for $n=1$.

\begin{theorem}\label{teo-weinstein0}
Let $(M,\omega)$ be a symplectic manifold such that the form $\omega$ is exact, let $\mathcal{K}_{0} \in \mathcal{C}^{\infty}(M)$ and let
$\Sigma \subset \mathrm{Per}_{\mathcal{K}_{0}}^H$ a non-degenerate periodic manifold, for some $H \in \mathbb{R}$.
Finally, consider a function $\mathcal{H} \in \mathcal{C}^{\infty}(M \times \mathbb{R})$ such that $\mathcal{H}_{0} = \mathcal{K}_{0}$, where
$\mathcal{H}_\varepsilon = \mathcal{H}(\cdot,\varepsilon)$. 
Then, denoting by $m$ the least integer greater or equal to $\mathrm{Cat}(\Sigma)/2$, for every neighborhood $\mathcal{U} \subset M \times (0,+\infty)$ of $\Sigma$, there exists $\varepsilon^{*} > 0$ such that,
if $\vert \varepsilon \vert < \varepsilon^{*}$, there exist $\tau_{1},\ldots,\tau_m > 0$ and there exist 
$z_{1},\ldots,z_m$ distinct periodic solutions of the Hamiltonian system
\begin{equation*}
\dot z = X_{\mathcal{H}_\varepsilon}(z)
\end{equation*}
such that, for every $i=1,\ldots,m$:
\begin{itemize}
\item[$(I)$] $\mathcal{H}_\varepsilon(z_i(t)) = H$, for every $t \in \mathbb{R}$,
\item[$(II)$] $\tau_i$ is a period for $z_i$,
\item[$(III)$] $\{(z_i(t),\tau_i) \colon t \in \mathbb{R} \} \subset \mathcal{U}$.
\end{itemize}
\end{theorem}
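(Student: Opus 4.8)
The plan is to deduce the statement as the special case $n=1$ (trivial group $\mathbb{Z}_1$) of Weinstein's general bifurcation theorem \cite[Theorem~1.4]{We-73}. I would begin by recalling the precise formulation of that theorem and by setting up a dictionary between its hypotheses and ours: exactness of $\omega$, the requirement that $\Sigma$ be a closed periodic manifold in the sense of $(i)$--$(ii)$, and the non-degeneracy condition $(iii)$. The role of exactness, writing $\omega = \mathrm{d}\theta$, is to provide a globally well-defined action functional whose constrained critical points (loops of energy $H$) are exactly the periodic solutions we seek; this is what renders the variational machinery of \cite{We-73} available in our setting.

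The core of the argument is a Lyapunov--Schmidt reduction carried out in a neighborhood of $\Sigma$. Since $\Sigma$ is a finite-dimensional manifold of critical points of the unperturbed action, for $\vert \varepsilon \vert$ small the problem of finding periodic solutions of $\dot z = X_{\mathcal{H}_\varepsilon}(z)$ with energy $H$ and period near that prescribed on $\Sigma$ reduces to finding critical points of a smooth \emph{bifurcation function} $F_\varepsilon$ defined on $\Sigma$. Condition $(iii)$ is precisely what makes this reduction possible: as recalled in Remark~\ref{rem-nondeg}, the space $\mathcal{E}$ of vectors $\eta \in T_\xi(\mathcal{K}_0^{-1}(H))$ with $\eta = P\eta + \lambda X_H(\xi)$ is the kernel of the linearized fixed-energy, free-period problem at $(\xi,\tau)$, and $(iii)$ states that this kernel equals $T_\xi\pi(\Sigma)$. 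Thus $\Sigma$ is a non-degenerate (Morse--Bott) critical manifold, which is exactly the hypothesis needed to perform the reduction (cf.~\cite[Lemma~1.1]{We-73}).

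It then remains to estimate the number of critical points of the reduced function $F_\varepsilon$. This is achieved by a Lusternik--Schnirelmann category argument on $\Sigma$, in the form established in \cite{We-73}; the symplectic structure is responsible for the factor $2$, so that one obtains at least $m = \lceil \mathrm{Cat}(\Sigma)/2 \rceil$ distinct critical points of $F_\varepsilon$. Each of them corresponds to a periodic solution of $\dot z = X_{\mathcal{H}_\varepsilon}(z)$ with energy $H$, yielding the desired $z_1, \dots, z_m$; for $\varepsilon$ sufficiently small these solutions lie in the prescribed neighborhood $\mathcal{U}$ and satisfy $(I)$--$(III)$ by construction.

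The main obstacle, and the reason Weinstein's theory is genuinely needed rather than a standard implicit-function argument, is the fixed-energy, free-period nature of the problem. The period $\tau$ cannot be fixed a priori and must enter as an additional unknown; correspondingly, the relevant kernel at $(\xi,\tau)$ is the enlarged space $\mathcal{E} \supset \mathcal{E}'$ rather than the fixed-point eigenspace of $P$, as stressed in Remark~\ref{rem-nondeg}. Setting up the functional-analytic framework so that the loop and its period vary together, and checking that $(iii)$ controls this full kernel including the period direction, is the delicate technical point. Since this is accomplished in \cite{We-73}, the proof reduces to verifying that our hypotheses match those of \cite[Theorem~1.4]{We-73} and specializing the conclusion to $n=1$.
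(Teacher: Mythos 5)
Your proposal is correct and takes essentially the same route as the paper: the paper offers no independent proof of this statement, but simply identifies it with \cite[Theorem~1.4]{We-73} for the trivial group action ($n=1$), exactly as you do. Your additional sketch of Weinstein's internal machinery (Lyapunov--Schmidt reduction over $\Sigma$, the identification of condition $(iii)$ with Morse--Bott non-degeneracy of the kernel $\mathcal{E}$, and the Lusternik--Schnirelmann category count giving $m = \lceil \mathrm{Cat}(\Sigma)/2\rceil$) is consistent with that reference and goes beyond what the paper records.
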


Notice that $(III)$ provides a localization information for both the orbit $\{z_i(t)\}_{t \in \mathbb{R}}$ on $M$ and the period $\tau_i$, which can thus be chosen arbitrarily near orbits and periods of the solutions of the unperturbed problem. We also point out that, actually, the exactness condition on the symplectic form $\omega$ can be removed, as shown later in \cite{We-78}.

\subsection{Bifurcation from periodic invariant tori}\label{sec2.2}

Let $\mathcal{D} \subset \mathbb{R}^n$ ($n \geq 2$) be an open set and let $\mathcal{K}_{0}\colon \mathcal{D} \to \mathbb{R}$ and $\mathcal{R}\colon \mathcal{D} \times \mathbb{R}^n \to \mathbb{R}$ be
$\mathcal{C}^{\infty}$ functions; moreover, we assume that $\mathcal{R}$ is $2\pi$-periodic in each variable $\varphi_i$, where $\varphi = (\varphi_{1},\ldots,\varphi_{n})$. 
Defining the symplectic form
\begin{equation}\label{def-omega}
\omega = \mathrm{d}\varphi \wedge \mathrm{d}I
\end{equation}
and, for every $\varepsilon \in \mathbb{R}$, the function $\mathcal{H}_\varepsilon \colon \mathcal{D} \times \mathbb{R}^n \to \mathbb{R}$ as
\begin{equation*}
\mathcal{H}_\varepsilon (I,\varphi)=\mathcal{K}_{0}(I)+\varepsilon \, \mathcal{R}(I,\varphi),
\end{equation*}
the Hamiltonian system associated with the Hamiltonian $\mathcal{H}_\varepsilon$ with respect to the symplectic structure induced by the form $\omega$ reads as
\begin{equation} \label{eq-quasiint}
\begin{cases}
\, \dot I = - \varepsilon \, \nabla_\varphi \mathcal{R}(I,\varphi), \\
\, \dot \varphi = \nabla \mathcal{K}_{0}(I) + \varepsilon \, \nabla_I \mathcal{R}(I,\varphi).
\end{cases}
\end{equation}
For $\varepsilon = 0$, system \eqref{eq-quasiint} reduces
to the completely integrable system in action-angle coordinates
\begin{equation} \label{eq-int}
\begin{cases}
\, \dot I = 0, \\
\, \dot \varphi = \nabla \mathcal{K}_{0}(I), 
\end{cases}
\end{equation}
whose solutions are given by
\begin{equation} \label{eq-solintegrabile}
I(t)=I^{*},\quad \varphi(t)=\varphi^{*} +t\, \nabla \mathcal{K}_{0}(I^{*}),\quad t\in \mathbb{R},
\end{equation}
being $(I^{*},\varphi^{*})\in \mathcal{D}\times \mathbb{R}^n$ the initial datum.

In the following, \eqref{eq-quasiint} is meant as the the lifting to the covering space of the corresponding Hamiltonian system
on the manifold
\begin{equation}\label{def-M}
M = \mathcal{D} \times \mathbb{T}^n,
\end{equation}
where $\mathbb{T}^n = \mathbb{R}^n / 2\pi \, \mathbb{Z}^n$ is the $n$-dimensional torus.
Accordingly, we say that a solution of
\eqref{eq-quasiint} is periodic if its projection on $M$ is periodic, that is, if there exists $\tau^{*} > 0$ such that
\begin{equation*}
I(\tau^{*}) = I(0), \qquad \varphi(\tau^{*}) - \varphi(0) \in 2\pi \, \mathbb{Z}^n.
\end{equation*}
Hence, if $I^{*} \in \mathcal{D}$ and $\tau^{*}> 0$ are such that
\begin{equation}\label{periodic-filled}
\tau^{*} \nabla \mathcal{K}_{0}(I^{*}) \in 2\pi \, \mathbb{Z}^n \setminus \{0\},
\end{equation}
and if $\tau^{*}$ is the least positive number with the above property, 
then the torus 
\begin{equation}\label{def-torus}
\mathcal{T}_{I^{*}} = \{I^{*}\} \times \mathbb{T}^n
\end{equation}
is filled by non-constant periodic solutions of system \eqref{eq-int} of minimal period $\tau^{*}$.

In this setting, we are interested in the existence of periodic solutions to \eqref{eq-quasiint}, having prescribed energy,
bifurcating from the torus $\mathcal{T}_{I^{*}}$. Our result is the following.

\begin{theorem} \label{teo-weinstein}
Let us assume that condition \eqref{periodic-filled} holds true and that 
\begin{equation}\label{KAM-iso}
\mathrm{det} \,
\begin{pmatrix}
\nabla^2 \mathcal{K}_{0}(I^{*}) & \quad \nabla \mathcal{K}_{0}(I^{*})^{\intercal} \vspace{3pt} \\
\nabla \mathcal{K}_{0}(I^{*}) & \quad 0
\end{pmatrix} 
\neq 0,
\end{equation}
where $\nabla \mathcal{K}_{0}(I^{*})$ is meant as a row vector.
Let $H^{*} = \mathcal{K}_{0}(I^{*})$ and let $m$ be the least integer greater or equal to $(n+1)/2$.
Then, for every $\varsigma >0$, there exists $\varepsilon^{*} > 0$ such that, if $\vert \varepsilon\vert \leq \varepsilon^{*}$,
there exist $\tau_{1},\ldots,\tau_m > 0$ and there exist $(I_{1},\varphi_{1}), \ldots, (I_n,\varphi_n)$ distinct periodic solutions of the Hamiltonian system \eqref{eq-quasiint}
such that, for every  $i=1,\ldots,m$:
\begin{itemize}
\item[$(I)$] $\mathcal{K}_\varepsilon (I_i(t),\varphi_i(t))=H^{*}$, for every $t \in \mathbb{R}$,
\item[$(II)$] $\tau_i$ is the minimal period for $(I_i,\varphi_i)$,
\item[$(III)$] the conditions
\begin{equation} \label{eq-distanzaperiodi}
\vert \tau_i-\tau^{*}\vert <\varsigma
\end{equation}
and 
\begin{equation} \label{eq-distanzaangoli}
\vert I_i(t)-I^{*}\vert <\varsigma,\quad \vert \varphi_i (t)-\varphi_i (0)-t\, \nabla \mathcal{K}_{0}(I^{*})\vert <C_*\varsigma,\quad \text{$\forall \, t\in \mathopen{[}0,\tau_i \mathclose{]}$,}
\end{equation}
are satisfied, where $C_* > 0$ is a constant depending only on $\mathcal{K}_{0}$ and $\mathcal{R}$.
\end{itemize}
\end{theorem}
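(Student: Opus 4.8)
The plan is to deduce Theorem~\ref{teo-weinstein} from the abstract result Theorem~\ref{teo-weinstein0}, applied on the symplectic manifold $M = \mathcal{D}\times\mathbb{T}^n$ with the exact form $\omega = \mathrm{d}\varphi\wedge\mathrm{d}I = \mathrm{d}\bigl(\sum_i\varphi_i\,\mathrm{d}I_i\bigr)$, unperturbed Hamiltonian $\mathcal{K}_0$, and full Hamiltonian $\mathcal{H}(\,\cdot\,,\varepsilon) = \mathcal{K}_0 + \varepsilon\mathcal{R}$, so that $\mathcal{H}_0 = \mathcal{K}_0$. First I would take as candidate periodic manifold
\begin{equation*}
\Sigma = \mathcal{T}_{I^*}\times\{\tau^*\} = \bigl(\{I^*\}\times\mathbb{T}^n\bigr)\times\{\tau^*\}\subset M\times\mathbb{R},
\end{equation*}
which under \eqref{periodic-filled} sits inside $\mathrm{Per}_{\mathcal{K}_0}^{H^*}$. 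Conditions $(i)$ and $(ii)$ are then immediate: $\Sigma$ is a closed submanifold diffeomorphic to $\mathbb{T}^n$, and the projection $\pi\colon M\times\mathbb{R}\to M$ restricted to $\Sigma$ is just the inclusion $\mathcal{T}_{I^*}\hookrightarrow M$, hence an embedding. Since $\Sigma\cong\mathbb{T}^n$ we have $\mathrm{Cat}(\Sigma) = n+1$, which already produces the number $m = \lceil(n+1)/2\rceil$ appearing in the conclusion.

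The heart of the argument is the verification of the non-degeneracy condition $(iii)$ and its identification with the isoenergetic condition \eqref{KAM-iso}. Exploiting that the unperturbed flow \eqref{eq-solintegrabile} is explicit, $\Phi^t(I_0,\varphi_0) = (I_0,\varphi_0 + t\,\nabla\mathcal{K}_0(I_0))$, I would compute the monodromy operator at $\xi = (I^*,\varphi)$ as
\begin{equation*}
P\begin{pmatrix}\delta I\\ \delta\varphi\end{pmatrix} = \begin{pmatrix}\delta I\\ \tau^*\nabla^2\mathcal{K}_0(I^*)\,\delta I + \delta\varphi\end{pmatrix},
\end{equation*}
together with $X_{\mathcal{K}_0}(\xi) = (0,\nabla\mathcal{K}_0(I^*))$, $\,T_\xi\pi(\Sigma) = \{(0,\delta\varphi)\}$ and $T_\xi(\mathcal{K}_0^{-1}(H^*)) = \{(\delta I,\delta\varphi)\colon\nabla\mathcal{K}_0(I^*)\cdot\delta I = 0\}$. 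A short computation then shows that $\eta = (\delta I,\delta\varphi)$ in the level-set tangent space satisfies $\eta = P\eta + \lambda X_{\mathcal{K}_0}(\xi)$ for some $\lambda$ precisely when $\tau^*\nabla^2\mathcal{K}_0(I^*)\,\delta I + \lambda\,\nabla\mathcal{K}_0(I^*)^{\intercal} = 0$. Hence $(iii)$ is equivalent to the assertion that the only solution $(\delta I,\lambda)$ of
\begin{equation*}
\tau^*\nabla^2\mathcal{K}_0(I^*)\,\delta I + \lambda\,\nabla\mathcal{K}_0(I^*)^{\intercal} = 0,\qquad \nabla\mathcal{K}_0(I^*)\cdot\delta I = 0,
\end{equation*}
is $\delta I = 0$ (whence $\lambda = 0$, as $\nabla\mathcal{K}_0(I^*)\neq 0$ by \eqref{periodic-filled}). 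This is exactly the nonsingularity of the bordered Hessian in \eqref{KAM-iso}: the factor $\tau^*>0$ is irrelevant, since rescaling $\lambda\mapsto\lambda/\tau^*$ identifies the two kernels and the two determinants therefore vanish simultaneously. Thus \eqref{KAM-iso}$\,\Longleftrightarrow\,(iii)$, and $\Sigma$ is non-degenerate.

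With the hypotheses of Theorem~\ref{teo-weinstein0} in force, I would read off $m$ distinct periodic solutions, lift them to $(I_i,\varphi_i)$ on $\mathcal{D}\times\mathbb{R}^n$, and translate the abstract conclusions. Item $(I)$ is immediate from $\mathcal{H}_\varepsilon(z_i)\equiv H^*$. For the localization, given $\varsigma$ I would choose the neighborhood $\mathcal{U}$ in Theorem~\ref{teo-weinstein0} to be a tubular neighborhood of $\Sigma$ so small that $(z_i(t),\tau_i)\in\mathcal{U}$ forces $|I_i(t) - I^*| < \varsigma$ and $|\tau_i - \tau^*| < \varsigma$, giving \eqref{eq-distanzaperiodi} and the first estimate in \eqref{eq-distanzaangoli}. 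The angle estimate I would instead obtain by integrating the second equation of \eqref{eq-quasiint},
\begin{equation*}
\varphi_i(t) - \varphi_i(0) - t\,\nabla\mathcal{K}_0(I^*) = \int_0^t\bigl[\nabla\mathcal{K}_0(I_i(s)) - \nabla\mathcal{K}_0(I^*)\bigr]\,\mathrm{d}s + \varepsilon\int_0^t\nabla_I\mathcal{R}(I_i(s),\varphi_i(s))\,\mathrm{d}s,
\end{equation*}
and bounding the right-hand side by $(\tau^*+\varsigma)\bigl(L\varsigma + \varepsilon\,\|\nabla_I\mathcal{R}\|_\infty\bigr)$, where $L$ is a local Lipschitz bound for $\nabla\mathcal{K}_0$; for $\varepsilon$ small relative to $\varsigma$ this yields the bound $C_*\varsigma$ with $C_*$ depending only on $\mathcal{K}_0$ and $\mathcal{R}$.

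The remaining and most technical point is upgrading ``a period'' to ``the minimal period'' in $(II)$, since Theorem~\ref{teo-weinstein0} only guarantees that $\tau_i$ is \emph{some} period. Writing the minimal period as $\tau_i/N_i$ with $N_i\in\mathbb{N}$, I would rule out $N_i\geq 2$ as follows. Over one minimal period the increment $\varphi_i(\tau_i/N_i) - \varphi_i(0)$ must lie in $2\pi\,\mathbb{Z}^n$, while by the integral identity above it is within $O(\varsigma) + O(\varepsilon)$ of $(\tau^*/N_i)\nabla\mathcal{K}_0(I^*)$; since nonzero lattice vectors have norm at least $2\pi$, this forces $\tau_i/N_i$ to be bounded below, so that $N_i$ ranges in a finite set. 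For each such $N_i\geq 2$ the minimality of $\tau^*$ in \eqref{periodic-filled} guarantees $(\tau^*/N_i)\nabla\mathcal{K}_0(I^*)\notin 2\pi\,\mathbb{Z}^n$, indeed at a fixed positive distance from it; choosing $\varsigma,\varepsilon$ small enough that the $O(\varsigma)+O(\varepsilon)$ error stays below this distance contradicts membership in $2\pi\,\mathbb{Z}^n$, so $N_i = 1$. I expect the identification \eqref{KAM-iso}$\,\Leftrightarrow\,(iii)$ in the second step to be the conceptual crux, and this minimal-period argument to be the main technical obstacle.
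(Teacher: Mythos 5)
Your proposal is correct and follows essentially the same route as the paper: the same choice $\Sigma=\mathcal{T}_{I^*}\times\{\tau^*\}$, the same computation of the monodromy operator identifying condition $(iii)$ with the bordered Hessian condition \eqref{KAM-iso}, the same integral estimate for the angular drift, and the same lattice-point argument (via the minimality of $\tau^*$ in \eqref{periodic-filled}) to upgrade $\tau_i$ to the minimal period. The only cosmetic difference is that the paper first pins down $\varphi_i(\tau_i)-\varphi_i(0)=\tau^*\nabla\mathcal{K}_0(I^*)$ exactly and then derives the divisibility contradiction, whereas you argue directly on the increment over one minimal period; both are sound.
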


\begin{proof}
Regarding \eqref{eq-quasiint} as a Hamiltonian system on the symplectic manifold $(M,\omega)$, with $M$ and $\omega$ 
as in \eqref{def-M} and \eqref{def-omega} respectively, and noticing that $\omega$ is exact on $M$, we are going to apply Theorem~\ref{teo-weinstein0} with the choice
\begin{equation*}
\Sigma = \mathcal{T}_{I^{*}} \times \{\tau^{*}\} \subset \mathrm{Per}_{\mathcal{K}_{0}}^{H^{*}},
\end{equation*}
with $\mathcal{T}_{I^{*}}$ as in \eqref{def-torus}. 
Of course, $\Sigma$ is diffeomorphic to a $n$-dimensional torus, so that condition $(i)$ holds true; moreover, since the period is constantly equal to $\tau^{*}$ for every solution on $\mathcal{T}_{I^{*}}$, condition $(ii)$ is trivially satisfied. 

It thus remains to the check that the non-degeneracy condition $(iii)$ is satisfied.
So, let us fix $(\xi,\tau) \in \Sigma$, that is $\tau = \tau^{*}$ and $\xi = (I^{*},\varphi)$ for some $\varphi \in [0,2\pi)$.
Then,
\begin{equation*}
T_{\xi}(\mathcal{K}_{0}^{-1}(H^{*})) = 
\bigl{\{} \eta = (v,\phi) \in \mathbb{R}^n \times \mathbb{R}^n  \colon \langle \nabla \mathcal{K}_{0}(I^{*}), v \rangle = 0 \bigr{\}}
\end{equation*}
and, of course,
\begin{equation*}
T_{\xi} \pi(\Sigma) = \{0\} \times \mathbb{R}^n.
\end{equation*}
On the other hand, 
\begin{equation*}
X_{\mathcal{K}_{0}}(\xi) = (0,\nabla \mathcal{K}_{0}(I^{*})),
\end{equation*}
while recalling \eqref{eq-solintegrabile} we easily infer that the monodromy operator $P$
is given by
\begin{equation}\label{formula-P}
P \eta = 
\begin{pmatrix}
\mathrm{Id}_{\mathbb{R}^{n}} & \quad 0 \vspace{3pt} \\
\tau^{*} \nabla^2 \mathcal{K}_{0}(I^{*}) & \quad \mathrm{Id}_{\mathbb{R}^n} 
\end{pmatrix} \eta
= (v, \tau^{*} \nabla^2 \mathcal{K}_{0}(I^{*}) v + \phi),
\end{equation}
for every $\eta = (v,\phi) \in T_{\xi}(\mathcal{K}_{0}^{-1}(H^{*}))$.
Hence
\begin{equation*}
(\mathrm{Id}_{\mathbb{R}^{2n}}-P) \eta = (0,\tau^{*} \nabla^2 \mathcal{K}_{0}(I^{*}) v)
\end{equation*} 
and thus
condition $(iii)$ is satisfied if and only if, given $\lambda \in \mathbb{R}$,
the only vector $v \in \mathbb{R}^n$ satisfying 
\begin{equation*}
\tau^{*} \nabla^2 \mathcal{K}_{0}(I^{*}) v = \lambda \nabla\mathcal{K}_{0}(I^{*})
\end{equation*}
and such that $\langle \nabla \mathcal{K}_{0}(I^{*}), v \rangle = 0$ is the vector $v = 0$.

Let us assume by contradiction that this is not case. Hence, setting $\lambda' = -\lambda/\tau^{*}$ we have found a non trivial solution
$(v,\lambda')$ of the linear homogeneous system in $\mathbb{R}^{n+1}$ whose coefficient matrix is
\begin{equation*}
\begin{pmatrix}
\nabla^2 \mathcal{K}_{0}(I^{*}) & \quad \nabla \mathcal{K}_{0}(I^{*})^{\intercal} \vspace{3pt} \\
\nabla \mathcal{K}_{0}(I^{*}) & \quad 0
\end{pmatrix}, 
\end{equation*}
thus contradicting assumption \eqref{KAM-iso}.

Summing up, all the assumptions of Theorem~\ref{teo-weinstein0} are satisfied.
Hence, denoting by $m$ the the least integer greater or equal to $\mathrm{Cat}(\Sigma)/2 = \mathrm{Cat}(\mathbb{T}^n)/2 = (n+1)/2$, 
given any $\varsigma >0$, we can consider the open neighborhood $U$ of $\Sigma$ given by
\begin{equation*}
U = B_\varsigma(I^{*}) \times \mathbb{T}^n \times (\tau^{*}- \varsigma, \tau^{*} +\varsigma)
\end{equation*}
and apply Theorem~\ref{teo-weinstein0} to find, for $\vert \varepsilon \vert \leq \varepsilon^{*}$, $m$ numbers $\tau_i >0$ satisfying \eqref{eq-distanzaperiodi} and $m$ solutions $(I_i,\varphi_i)$ of system \eqref{eq-quasiint}, with period $\tau_i$, energy $H$ and satisfying the first condition in \eqref{eq-distanzaangoli}. We now claim that condition \eqref{eq-distanzaangoli} is satisfied, as well. To prove this, 
we assume without loss of generality that $\varepsilon^{*} \leq \varsigma$ and we first notice that the first condition in \eqref{eq-distanzaangoli}, together with the regularity of $\mathcal{K}_{0}$ and $\mathcal{R}$, implies that
\begin{equation*}
\vert \nabla \mathcal{K}_{0}(I_i(t)) -  \nabla \mathcal{K}_{0}(I^{*}) \vert < \frac{C _*}{2 \tau_i} \varsigma, 
\quad \text{for every $t\in [0,\tau_i]$,}
\end{equation*}
and that
\begin{equation*}
\vert \nabla_I \mathcal{R}(I_i(t),\varphi_i(t )) \vert < \frac{C _*}{2\tau_i},
\quad \text{for every $t\in [0,\tau_i]$,}
\end{equation*}
where $C_* > 0$ is a constant depending only on $\mathcal{K}_{0}$ and $\mathcal{R}$.
Since
\begin{align*}
& \varphi_i(t) - \varphi_i(0) - t \,\nabla \mathcal{K}_{0}(I^{*})  = 
\\
&= \int_{0}^t \bigl{(} \dot\varphi_i(s) - \nabla \mathcal{K}_{0}(I^{*})\bigr{)} \,\mathrm{d}s  
\\
& = \int_{0}^t \bigl{(} \nabla \mathcal{K}_{0}(I_i(s)) -  \nabla \mathcal{K}_{0}(I^{*}) \bigr{)} \,\mathrm{d}s 
+ \varepsilon \int_{0}^t  \nabla_I \mathcal{R}(I_i(s),\varphi_i(s)) \,\mathrm{d}s,
\end{align*}
we thus have
\begin{equation*}
\left \vert \varphi_i(t) - \varphi_i(0) - t \,\nabla \mathcal{K}_{0}(I^{*})  \right \vert \leq C_* \varsigma,
\quad \text{for every $t \in [0,\tau_i]$,}
\end{equation*}
as desired.

It thus remains to show that $\tau_i$ is the minimal period of $(I_i,\varphi_i)$. 
To this end, we first observe that \eqref{eq-distanzaperiodi} and \eqref{eq-distanzaangoli} imply, for $\varsigma$ small enough, that
\begin{equation*}
\varphi_i(\tau_i) - \varphi_i(0) = \tau^{*} \nabla\mathcal{K}_{0}(I^{*}).
\end{equation*}
Now, let us assume by contradiction that the period $\tau_i$ is not minimal, that is, $\tau_i/\ell$ is also a period for some integer $\ell \geq 2$.
Hence, both the components of the vector $\varphi_i(\tau_i) - \varphi_i(0)$ must be a multiple of $\ell$, and thus 
\begin{equation*}
\frac{\tau^{*}}{\ell}\nabla\mathcal{K}_{0}(I^{*}) \in 2\pi \, \mathbb{Z}^n \setminus \{0\}.
\end{equation*}
This contradicts that fact that $\tau^{*}$ is the least number satisfying \eqref{periodic-filled} (that is to say, $\tau^{*}$ is not the minimal period of the solution on the unperturbed torus $\mathcal{T}_{I^{*}}$).
\end{proof}

\begin{remark} \label{rem-numerosol} 
In Section~\ref{sec-autonomo} we will apply Theorem~\ref{teo-weinstein} in the case of two degrees of freedom, that is, $n=2$.
It is easy to see that in this situation the non-degeneracy condition \eqref{KAM-iso} writes equivalently as
\begin{equation} \label{eq-kamiso}
\langle \nabla^2 \mathcal{K}_{0}(I^{*}) (\partial_{2} \mathcal{K}_{0}(I^{*}), -\partial_{1} \mathcal{K}_{0}(I^{*})),(\partial_{2} \mathcal{K}_{0}(I^{*}), -\partial_{1} \mathcal{K}_{0}(I^{*})) \rangle\neq 0.
\end{equation}
We also explicitly observe that the number of bifurcating periodic solutions of \eqref{eq-quasiint} is $2$.
\hfill$\lhd$
\end{remark}

\begin{remark}\label{rem-nondeg2}
From \eqref{formula-P}, it is immediate to deduce that $1$ is the only eigenvalue of $P$: thus, its algebraic multiplicity is $2n-1$, a number which  is (for $n \geq 2$) strictly greater than $n$, that is the dimension of the manifold $\Sigma$. Therefore, the non-degeneracy condition of \cite{We-77} is not satisfied in our setting, cf.~Remark~\ref{rem-nondeg}.
\hfill$\lhd$
\end{remark}

\section{Main results} \label{sec-autonomo}

In this section, we apply Theorem~\ref{teo-weinstein} to study bifurcation of periodic solutions for the two relativistic problems described in the Introduction: the Kepler problem with Levi-Civita correction \eqref{eq-pert2} and the Kepler problem with relativistic differential operator \eqref{eq-pert1}.

The two problems share some structural features which enable us to introduce a common strategy to deal with. Indeed, they both have a Hamiltonian formulation and the unperturbed problems are integrable, due to the presence of two independent first integrals in involution (the energy and the angular momentum).

The construction of action-angle coordinates in both the cases is based on a phase-plane analysis for the radial component of the solutions in polar coordinates.

For the purpose of applying Theorem~\ref{teo-weinstein}, we first characterize the pairs energy/angular momentum for which periodic solutions exist and then we directly prove the validity of the non-degeneracy condition \eqref{eq-kamiso}.

\subsection{The Kepler problem with Levi-Civita correction} \label{subsec-levicivita}

In this section we deal with the problem
\begin{equation} \label{eq-levicivitacompl}
m \ddot x = -\kappa \dfrac{x}{\vert x\vert ^{3}} - 2\lambda \dfrac{x}{\vert x\vert ^4} + \varepsilon \,\nabla U(x),
\end{equation}
where $m>0$, $\kappa>0$, $\lambda>0$, $U \in \mathcal{C}^{\infty}(\mathbb{R}^2,\mathbb{R})$, and $\varepsilon \in \mathbb{R}$.

Our result is the following (by now, we do not give any physical interpretation for the constants $\kappa$ and $\lambda$: this aspect will be treated in the discussion leading to Corollary~\ref{cor-levi}).

\begin{theorem} \label{teo-levicivita}
Let $H^{*}<0$ and $N\in \mathbb{N}$. Then, for every $\varsigma>0$ there exists $\varepsilon^{*}=\varepsilon^{*}(N,\varsigma)>0$ such that for every $\varepsilon\in \mathbb{R}$, with $\vert \varepsilon\vert \leq \varepsilon^{*}$, equation \eqref{eq-levicivitacompl} has at least $2N$ periodic solutions $x^i_{1},\ldots, x^i_N$, $i=1,2$,	of energy $H^{*}$.

Moreover, for every $j=1,\ldots, N$ and $i=1,2$, the solution $x^i_j$ has minimal period $T_{i,j}$ satisfying
\begin{equation*}
\left\vert T_{i,j}-\dfrac{\pi \kappa \sqrt{m}}{\sqrt{2} (-H^{*})^{3/2}}\right\vert <\varsigma
\end{equation*}
and winding number in its minimal period
\begin{equation*}
\left\lfloor\dfrac{\sqrt{-2H^{*}}}{\kappa}\, \sqrt{2\lambda +\dfrac{\kappa^2}{-2H^{*}}}\right\rfloor+j,
\end{equation*}
where $\lfloor\cdot\rfloor$ denotes the integer part.
\end{theorem}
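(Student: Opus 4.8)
The plan is to recast the unperturbed equation \eqref{eq-levicivitacompl} (with $\varepsilon = 0$) as an integrable Hamiltonian system with two degrees of freedom and then apply Theorem~\ref{teo-weinstein} (with $n=2$) on a suitable finite family of resonant invariant tori. Passing to polar coordinates $x = r(\cos\theta,\sin\theta)$ with conjugate momenta $(p_r,p_\theta)$, the Hamiltonian reads $\frac{p_r^2}{2m}+\frac{p_\theta^2}{2mr^2}-\frac{\kappa}{r}-\frac{\lambda}{r^2}$; since $\theta$ is cyclic, $p_\theta=L$ (the angular momentum) is conserved, and the $-\lambda/r^2$ term merges with the centrifugal term to yield an effective potential $V_{\mathrm{eff}}(r)=\frac{\tilde L^2}{2mr^2}-\frac{\kappa}{r}$ with $\tilde L^2 = L^2 - 2m\lambda$. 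The key structural observation is thus that the radial dynamics is \emph{exactly} that of a Kepler problem with the modified angular momentum $\tilde L$, the Levi-Civita correction manifesting itself only through a precession of the orbit.

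A phase-plane analysis of the radial equation then identifies the region of bounded, hence radially periodic, motion: for $H^{*}<0$ one needs $0<\tilde L^2 < m\kappa^2/(-2H^{*})$, that is $L^2 \in (2m\lambda,\, 2m\lambda + m\kappa^2/(-2H^{*}))$. On this region I would introduce action-angle coordinates $(I_1,I_2)=(J_r,L)$, where $J_r=\frac{1}{2\pi}\oint p_r\,\mathrm{d}r$ is the radial action; the classical Kepler computation gives $J_r = \kappa\sqrt{m/(-2H)}-\tilde L$, so that the unperturbed Hamiltonian takes the explicit form $\mathcal{K}_{0}(I_1,I_2)=-\frac{m\kappa^2}{2\,(I_1+\sqrt{I_2^2-2m\lambda}\,)^2}$. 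The frequencies are $\omega_1=\partial_1\mathcal{K}_{0}$ and $\omega_2=\partial_2\mathcal{K}_{0}=\omega_1\,I_2/\tilde L$, whence $\omega_1 = 2\pi/T_r$ with $T_r = \pi\kappa\sqrt m/(\sqrt 2\,(-H^{*})^{3/2})$, and the frequency ratio $\omega_2/\omega_1 = L/\tilde L = \sqrt{1+2m\lambda/\tilde L^2}$ equals the number of turns of the orbit about the origin per radial period. As $\tilde L^2$ ranges over the admissible interval, this ratio sweeps $(\beta,+\infty)$ with $\beta=\frac{\sqrt{-2H^{*}}}{\kappa}\sqrt{2\lambda+\kappa^2/(-2H^{*})}$; hence for each integer $k=\lfloor\beta\rfloor+j$ ($j=1,\dots,N$) there is a unique resonant torus $\mathcal{T}_{I^{*}_{j}}$, characterized by $\tilde L^2 = 2m\lambda/(k^2-1)$, on which $\tau^{*}\nabla\mathcal{K}_{0}=(2\pi,2\pi k)$ and $\tau^{*}=T_r$ is the minimal period, so that \eqref{periodic-filled} holds.

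The decisive step is the isoenergetic non-degeneracy condition \eqref{eq-kamiso}. Since $\mathcal{K}_{0}$ depends on $(I_1,I_2)$ only through $S=I_1+\sqrt{I_2^2-2m\lambda}$, writing $\mathcal{K}_{0}=f(S)$ with $f(S)=-m\kappa^2/(2S^2)$, one checks that the quadratic form in \eqref{eq-kamiso}, evaluated on $(\partial_2\mathcal{K}_{0},-\partial_1\mathcal{K}_{0})=f'(S)(I_2/\tilde L,-1)$, has all its $f''$-contributions cancel identically and collapses to $-f'(S)^3\cdot 2m\lambda/\tilde L^3$. As $f'(S)=\omega_1>0$, $\tilde L>0$ and $\lambda>0$, this is strictly negative, hence nonzero. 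I expect this to be the heart of the matter: the pure Kepler problem ($\lambda=0$) is isoenergetically \emph{degenerate}, since its period depends only on the energy and all frequency ratios coincide on a fixed energy level, and it is precisely the Levi-Civita term $\lambda>0$ that breaks this degeneracy and renders Weinstein's theory applicable.

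Finally, I would apply Theorem~\ref{teo-weinstein} on each torus $\mathcal{T}_{I^{*}_{j}}$. One must verify that the perturbation $\varepsilon\,\nabla U(x)$ corresponds, in action-angle coordinates, to a smooth term $\varepsilon\,\mathcal{R}(I,\varphi)$ that is $2\pi$-periodic in the angles on an open set $\mathcal{D}\times\mathbb{T}^2$ containing all the resonant tori; this follows from smoothness of the action-angle change of variables on the region of bounded motion. Theorem~\ref{teo-weinstein} (with $n=2$, so $m=2$, cf.~Remark~\ref{rem-numerosol}) then produces, for $\vert\varepsilon\vert$ small, two distinct periodic solutions of energy $H^{*}$ near each $\mathcal{T}_{I^{*}_{j}}$, with minimal period close to $T_r$; transporting back to the original variables yields the $2N$ solutions $x^i_j$. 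The period estimate is immediate from \eqref{eq-distanzaperiodi}, while the winding number, being an integer-valued homotopy invariant, is preserved under the small perturbation controlled by \eqref{eq-distanzaangoli} and equals $k=\lfloor\beta\rfloor+j$. Choosing the neighborhoods disjoint for distinct $j$ ensures that solutions with different winding numbers are distinct, and the two solutions on each torus are distinct by Weinstein's result.
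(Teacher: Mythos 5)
Your proposal is correct and follows the same overall strategy as the paper: Hamiltonian formulation, phase-plane analysis of the radial motion via the effective potential, explicit action-angle variables with a closed-form $\mathcal{K}_{0}$, identification of the resonant tori through the apsidal angle, and application of Theorem~\ref{teo-weinstein} with $n=2$. The one step where you genuinely diverge is the verification of the isoenergetic non-degeneracy condition \eqref{eq-kamiso}. The paper computes all second derivatives of $\mathcal{K}_{0}(I_1,I_2)=-\tfrac{m\kappa^2}{2}\gamma(I_1,I_2)^{-2}$ and shows that $\nabla^2\mathcal{K}_{0}$ is negative definite (positive determinant, negative trace), which is a sufficient condition for \eqref{eq-kamiso}. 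You instead exploit the composite structure $\mathcal{K}_{0}=f(I_1+g(I_2))$ with $g(I_2)=\tilde L$ and evaluate the quadratic form of \eqref{eq-kamiso} on $(\partial_2\mathcal{K}_{0},-\partial_1\mathcal{K}_{0})=f'(S)\,(g',-1)$ directly: the $f''$-terms cancel and the form collapses to $(f')^3 g''=-2m\lambda\,(f')^3/\tilde L^{3}<0$. This is a legitimate and arguably sharper argument, since it computes exactly the quantity required by \eqref{eq-kamiso} rather than a sufficient surrogate, and it makes transparent why the pure Kepler problem ($\lambda=0$, hence $g''=0$) is isoenergetically degenerate while the Levi-Civita correction restores non-degeneracy --- a fact visible in the paper only through the proportionality of $\det\nabla^2\mathcal{K}_{0}$ to $\lambda$. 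The remaining differences (your Delaunay-type normalization $(I_1,I_2)=(J_r,L)$ versus the paper's $(\tfrac{1}{2\pi}\mathcal{A}+L,\,L)$, and your explicit parametrization $\tilde L^2=2m\lambda/(k^2-1)$ of the resonant tori versus the paper's monotonicity argument for $\Theta(H^{*},\cdot)$) are only cosmetic; the counting, the period estimate via \eqref{eq-distanzaperiodi}, and the winding-number argument via \eqref{eq-distanzaangoli} all match the paper.
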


Theorem~\ref{teo-levicivita} follows from the application of Theorem~\ref{teo-weinstein} to the Hamiltonian system associated to \eqref{eq-levicivitacompl}. Indeed, let us first recall that \eqref{eq-levicivitacompl} can be written as a Hamiltonian system, with respect to the variables $(x,p)=(x,m\dot{x})$. The Hamiltonian is given by
\begin{equation*}
\mathcal{H}_\varepsilon(x,p)=\dfrac{1}{2m} \vert p\vert ^2-\dfrac{\kappa}{\vert x\vert }-\dfrac{\lambda}{\vert x\vert ^2}-\varepsilon \, U(x),
\end{equation*}
whose values correspond to values of the energy 
\begin{equation} \label{eq-energiaLC}
E_\varepsilon (x,\dot{x})=\dfrac{1}{2} m \vert \dot{x}\vert ^2 -\dfrac{\kappa}{\vert x\vert }-\dfrac{\lambda}{\vert x\vert ^2}-\varepsilon \, U(x).
\end{equation}

Now, let us observe that the Hamiltonian system associated to $\mathcal{H}_\varepsilon$ is nearly integrable and that can be transformed in the form \eqref{eq-quasiint} passing to action-angle coordinates. The construction is quite standard (cf.~\cite[Section~2]{BoDaFe-pp}) and it relies on the fact that the unperturbed Hamiltonian system associated to $\mathcal{H}_{0}$ has a second first integral independent from $\mathcal{H}_{0}$ and in involution with it, which is the angular momentum defined by
\begin{equation} \label{eq-momanglevi}
\mathcal{L}_{0}(x,p) = \langle x, Jp \rangle, \qquad \text{where } J = \begin{pmatrix} 0 & 1 \\
-1 & 0\end{pmatrix}.
\end{equation}
In order to define the change of variable, we focus on the unperturbed problem 
\begin{equation} \label{eq-levicivita}
m\ddot x = -\kappa \dfrac{x}{\vert x\vert ^{3}} - 2\lambda \dfrac{x}{\vert x\vert ^4}
\end{equation}
and we pass to polar coordinates $x=r e^{i\vartheta}$. 

Let us define
\begin{equation*}
\Lambda =\biggl{\{}(H,L)\in\mathbb{R}^{2} \colon L^2\in \left(\dfrac{2\lambda}{m},+\infty\right), \; H\in \left(-\dfrac{\kappa^{2}}{2(mL^{2}-2\lambda)}, 0\right) \biggr{\}}.
\end{equation*}
Following \cite{BoDaFe-pp}, we know that the radial component $r$ of the solutions of \eqref{eq-levicivita} satisfies
\begin{equation*}
\dfrac{1}{2} m \dot{r}^2+\dfrac{mL^2}{2r^2}-\dfrac{\kappa}{r}-\dfrac{\lambda}{r^2}=H.
\end{equation*}
From the above relation we infer that for every $L\in \mathbb{R}$ the radial component moves accordingly to a nonlinear oscillator driven by the effective potential
\begin{equation*}
W(r;L)=\dfrac{mL^2-2\lambda}{2r^2}-\dfrac{\kappa}{r}.
\end{equation*}
When $mL^2>2\lambda$, this potential satisfies
\begin{equation*}
\lim_{r\to 0^+} W(r;L)=+\infty, \quad \lim_{r\to +\infty} W(r;L)=0,
\end{equation*}
and it is strictly decreasing in $(0,(mL^2-2\lambda)/\kappa)$ and strictly increasing in $((mL^2-2\lambda)/\kappa,+\infty)$. The global minimum of $W(\cdot;L)$ is
\begin{equation} \label{eq-minLC}
w_{\min} (L)=-\dfrac{\kappa^2}{2(mL^2-2\lambda)}.
\end{equation}
As a consequence, for every $(H,L)\in \Lambda$ the orbits in the $(r,\dot{r})$-plane are closed curves.

In \cite[Section~4.1]{BoDaFe-pp} it is proved that the corresponding radial period is given by
\begin{equation}\label{eq-perradLC}
T(H,L)=\dfrac{\pi \kappa \sqrt{m}}{\sqrt{2} (-H)^{3/2}}
\end{equation}
and that the angular displacement in this period (the so-called \textit{apsidal angle}) is 
\begin{equation*}
\Theta (H,L)=\dfrac{2\pi  \sqrt{m} L}{\sqrt{mL^2-2\lambda}}.
\end{equation*}
For every $(H,L)\in \Lambda$, the set
\begin{equation*}
\mathcal{T}_{(H,L)} = \bigl{\{}(x,p)\in(\mathbb{R}^{2}\setminus\{0\})\times\mathbb{R}^{2} 
\colon \mathcal{H}_{0}(x,p)=H, \; \mathcal{L}_{0}(x,p)=L \bigr{\}}
\end{equation*}
is then diffeomorphic to a two-dimensional torus $\mathbb{T}^{2}$ (cf.~\cite{Ar-89, BeFa-notes}). 

Action-angle coordinates can be constructed in a neighborhood of $\mathcal{T}_{(H^{*},L^{*})}$, for every fixed $(H^{*},L^{*})\in\Lambda$. Indeed, given $(H^{*},L^{*})\in\Lambda$, for every $(H,L)$ in a neighborhood $\Upsilon^{*}$ of $(H^{*},L^{*})$, let $\mathcal{A}(H,L)$ be the area of the bounded region enclosed by the orbit corrsponding to $(H,L)$ in the $(r,\dot r)$-plane and define $\Pi \colon \Upsilon^{*}\to \mathbb{R}^4$ by
\begin{equation} \label{eq-defaaLC}
\Pi(H,L)=\left(\dfrac{1}{2\pi}\mathcal{A}(H,L)+L, L, 2\pi \dfrac{\mu}{T(H,L)},(\Theta(H,L)-2\pi)\dfrac{\mu}{T(H,L)}+\psi\right),
\end{equation}
for every $(H,L)\in \Upsilon^{*}$, where $\mu \in [0,T(H,L))$ is the time needed to reach the point $(r,\dot{r})$ in the $(r,\dot{r})$-plane along the orbit starting form the pericenter and $\psi\in [0,2\pi)$ is the angle, measured in the counter-clockwise sense from $\vartheta=0$, of the greatest non-positive instant in which $x$ lies at the pericenter.

The map
\begin{equation*}
\Xi \colon (x,p) \mapsto \Pi(\mathcal{H}_{0}(x,p),\mathcal{L}_{0}(x,p))
\end{equation*}
is well-defined for $(x,p)$ in a neighbourhood of the fixed torus $\mathcal{T}_{(H^{*},L^{*})}$ and it is well-known that it provides a symplectic diffeomorphism from this set onto its image.

In the new variables, the Hamiltonian system is of the form \eqref{eq-quasiint} with 
\begin{equation}\label{eq-def-kLV}
\mathcal{K}_{0}(I_{1},I_{2}) = \mathcal{H}_{0}(\Xi^{-1}(I_{1},I_{2},\varphi_{1},\varphi_{2})).
\end{equation}
We are now in a position to prove Theorem~\ref{teo-levicivita}.

\begin{proof}[Proof of Theorem~\ref{teo-levicivita}] From the previous discussion, we know that in the action-angle variables $(I_{1},I_{2},\varphi_{1},\varphi_{2})$ the Hamiltonian system corresponding to \eqref{eq-levicivitacompl} is a smooth perturbation of the Hamiltonian system associated to the Hamiltonian $\mathcal{K}_{0}$ defined in \eqref{eq-def-kLV}.

Now, let us fix $H^{*}<0$, $N\in \mathbb{N}$, $j\in \{1,\ldots N\}$ and let us set 
\begin{equation*}
k^{*}_j=\left \lfloor \dfrac{\sqrt{-2H^{*}}}{\kappa}\, \sqrt{2\lambda +\dfrac{\kappa^2}{-2H^{*}}}\right\rfloor+j.
\end{equation*} 
Let
\begin{equation*}
L^{*}_{\max}=\dfrac{1}{\sqrt{m}}\, \sqrt{2\lambda+\dfrac{\kappa^2}{-2H^{*}}}.
\end{equation*}
Notice that $L^{*}_{\max}$ is the unique solution of
\begin{equation*}
H^{*}=-\dfrac{\kappa^2}{2(mL^2-2\lambda)}
\end{equation*}
and that $(H^{*},L^{*})\in \Lambda$, for every $L^{*}\in (\sqrt{2\lambda/m},L^{*}_{\max}) $. Moreover, since
\begin{equation*}
2\pi \dfrac{\sqrt{-2H^{*}}}{\kappa}\, \sqrt{2\lambda +\dfrac{\kappa^2}{-2H^{*}}}=\Theta (H^{*},L^{*}_{\max}),
\end{equation*}
from the the monotonicity of $\Theta (H^{*},\cdot)$ and the choice of $k^{*}_j$ we deduce that there exists a unique $L^{*}_j\in (\sqrt{2\lambda/m},L^{*}_{\max}) $ such that 
\begin{equation} \label{eq-numerogiriLC}
\Theta (H^{*},L^{*}_j)= 2\pi\, k^{*}_j.
\end{equation}
Since $(H^{*},L^{*}_j)\in \Lambda$, as already observed, we deduce that the solution of the unperturbed equation \eqref{eq-levicivita} having energy $H^{*}$ and angular momentum $L^{*}_j$ is periodic of period $T(H^{*},L^{*}_j)$, with $T$ as in \eqref{eq-perradLC}, and it has winding number $2\pi k^{*}_j$ in a period.

As far as \eqref{eq-kamiso} is concerned, with $\mathcal{K}_{0}$ given in \eqref{eq-def-kLV}, we prove its validity for every $(I_{1},I_{2})$ in the domain of $\mathcal{K}_{0}$, which will be denoted by $\mathcal{O}^{*}$. Indeed, let us first observe that by construction of the action-angles variables we have $\mathcal{K}_{0} (I_{1},I_{2})=H(I_{1},I_{2})$, where $(H,L)=(H(I_{1},I_{2}),L(I_{1},I_{2}))$ satisfies
\begin{equation*}
I_{1}=\dfrac{1}{2\pi}\, \mathcal{A} (H,L)+L,
\qquad I_{2}=L
\end{equation*}
(recall \eqref{eq-defaaLC}). From these relations we immediately deduce that $L(I_{1},I_{2})=I_{2}$ and that $H=H(I_{1},I_{2})$ fulfills
\begin{equation} \label{eq-invertireLC}
\dfrac{1}{2\pi}\, \mathcal{A} (H,I_{2})=I_{1}-I_{2}.
\end{equation}
Now, let us recall that $\mathcal{A}(H,L)=\partial_H T(H,L)$, for every $(H,L)\in \Lambda$ (see \cite[p. 282]{Ar-89}). Taking into account \eqref{eq-perradLC}, we infer that 
\begin{equation} \label{eq-pr201}
\mathcal{A} (H,L)=\dfrac{2\pi \kappa}{\sqrt{-2H}}+\zeta(L),
\quad \text{for every $(H,L)\in \Lambda$,}
\end{equation}
for some differentiable function $\zeta \colon (-\infty,-\sqrt{2\lambda/m})\cup (\sqrt{2\lambda/m},+\infty)\to \mathbb{R}$. For every $L\in (-\infty,-\sqrt{2\lambda/m})\cup (\sqrt{2\lambda/m},+\infty)$, the value $\zeta(L)$ can be easily obtained recalling that for the minimum admissible value of the energy $w_{\min}(L)$ (cf.~\eqref{eq-minLC}) we have
\begin{equation*}
\lim_{H\to w_{\min}(L)} \mathcal{A} (H,L)=0.
\end{equation*}
From \eqref{eq-pr201} we then deduce that
\begin{equation*}
\zeta (L)=- \dfrac{2\pi \kappa}{\sqrt{-2w_{\min}(L)}}=-2\pi \sqrt{m}\sqrt{mL^2-2\lambda},
\end{equation*}
thus concluding that
\begin{equation*}
\mathcal{A} (H,L)=2\pi  \sqrt{m} \, \left(\dfrac{\kappa}{\sqrt{-2H}}- \sqrt{mL^2-2\lambda}\right),
\quad \text{for every $(H,L)\in \Lambda$.}
\end{equation*}
As a consequence, \eqref{eq-invertireLC} reduces to
\begin{equation*}
\dfrac{\kappa  \sqrt{m}}{\sqrt{-2H}}-  \sqrt{m}\, \sqrt{mI_{2}^2-2\lambda}=I_{1}-I_{2}.
\end{equation*}
By means of a standard computations we obtain 
\begin{align*}
\mathcal{K}_{0}(I_{1},I_{2})=H(I_{1},I_{2})
&=-\dfrac{m\kappa^2}{2}\, \left(I_{1}-I_{2}+ \sqrt{m} \, \sqrt{mI_{2}^2-2\lambda}\right)^{\!-2}
\\
& = -\dfrac{m\kappa^2}{2}\, \gamma(I_{1},I_{2})^{-2},
\quad \text{for every $(I_{1},I_{2})\in \mathcal{O}^{*}$,}
\end{align*}
where $\gamma (I_{1},I_{2})=I_{1}-I_{2}+ \sqrt{m}\, \sqrt{m I_{2}^2-2\lambda}$. We then deduce that
\begin{align*}
\partial_{1} \mathcal{K}_{0}(I_{1},I_{2}) &= m\kappa^2 \gamma(I_{1},I_{2})^{-3}
\\
\partial_{2} \mathcal{K}_{0}(I_{1},I_{2}) &= m \kappa^2 \gamma(I_{1},I_{2})^{-3}\, \left(m\sqrt{m}I_{2}\, \bigl{(}mI_{2}^2-2\lambda\bigr{)}^{\!-\frac{1}{2}} -1\right)
\end{align*}
and
\begin{align*}
\partial^2_{11} \mathcal{K}_{0}(I_{1},I_{2})
&= -3m\kappa^2 \gamma(I_{1},I_{2})^{-4}
\\
\partial^2_{12} \mathcal{K}_{0}(I_{1},I_{2}) 
&= -3m\kappa^2 \gamma(I_{1},I_{2})^{-4}\, \left(m\sqrt{m} I_{2}\, \bigl{(}m I_{2}^2-2\lambda\bigr{)}^{\!-\frac{1}{2}} -1\right)
\\
\partial^2_{22} \mathcal{K}_{0}(I_{1},I_{2})
&= -3m \kappa^2 \gamma(I_{1},I_{2})^{-4}\, \left(m\sqrt{m}I_{2}\, \bigl{(}mI_{2}^2-2\lambda\bigr{)}^{\!-\frac{1}{2}} -1\right)^2
\\
&-2\lambda m^2\sqrt{m}\kappa^2 \gamma(I_{1},I_{2})^{-3}\, \bigl{(}mI_{2}^2-2\lambda\bigr{)}^{\!-\frac{3}{2}},
\end{align*}
for every $(I_{1},I_{2})\in \mathcal{O}^{*}$. Hence, we obtain
\begin{equation*}
\det \mathcal{K}_{0} (I_{1},I_{2})=6\lambda m^3\sqrt{m}\kappa^4 \gamma(I_{1},I_{2})^{-7}\, \bigl{(}mI_{2}^2-2\lambda\bigr{)}^{\!-\frac{3}{2}}>0,
\end{equation*}
for every $(I_{1},I_{2})\in \mathcal{O}^{*}$. On the other hand, since
\begin{equation*}
\partial^2_{11} \nabla^2 \mathcal{K}_{0}(I_{1},I_{2})=-3m\kappa^2 \gamma(I_{1},I_{2})^{-4}<0,
\quad \text{for every $(I_{1},I_{2})\in \mathcal{O}^{*}$,}
\end{equation*}
we deduce that the trace of $\nabla^2 \mathcal{K}_{0} (I_{1},I_{2})$ is negative, for every $(I_{1},I_{2})\in \mathcal{O}^{*}$. Hence, $\nabla^2 \mathcal{K}_{0} (I_{1},I_{2})$ is definite negative, for every $(I_{1},I_{2})\in \mathcal{O}^{*}$. Condition \eqref{eq-kamiso} is then satisfied for every $I=(I_{1},I_{2})\in \mathcal{O}^{*}$.

We can now conclude. Indeed, let  $(I^{*}_j,\varphi^{*}_j)=\Pi(H^{*},L^{*}_j)$, where $\Pi$ is defined in \eqref{eq-defaaLC}.
From Theorem~\ref{teo-weinstein} we deduce that for every $\varsigma>0$ there exists $\varepsilon^{*}_{j}$ such that for every $\varepsilon\in \mathbb{R}$ with $\vert \varepsilon\vert \leq \varepsilon^{*}_{j}$ equation \eqref{eq-levicivitacompl} has two periodic solutions $x^i_j$, $i=1, 2$, with energy $H^{*}$ and satisfying the required property of the minimal period. Moreover, the condition on the winding number of $x^i_j$ follows from \eqref{eq-numerogiriLC} taking into account \eqref{eq-distanzaangoli} and the continuity of the winding number (cf.~the argument in the proof of \cite[Theorem~3.2]{BoDaFe-21}).
The result then follows taking $\varepsilon^{*}=\min \{\varepsilon^{*}_{j} \colon j=1,\ldots , N\}$.
\end{proof}

\begin{remark}\label{rem-3.1}
We mention that the isoenergetic non-degeneracy of equation \eqref{eq-levicivita} was already proved in \cite{NuCaLl-91} using a different set of action-angles coordinates. Our result can thus be seen as a periodic counterpart of \cite{NuCaLl-91} which deals with the persistence of quasi-periodic invariant tori.
\hfill$\lhd$
\end{remark}

We end this section with a corollary of Theorem~\ref{teo-levicivita}, concerning the original relativistic problem introduced by Levi-Civita in \cite[Sect.~11 of Ch.~1, Sect.~9 of Ch.~2]{LC-28}.

In the context of the relativistic approach by Levi-Civita, the parameters $\kappa$ and $\lambda$ are related to the speed of light $c$ and the einsteinian energy $E<0$ by
\begin{equation} \label{eq-kappalambda}
\kappa (E,c)=mGM + \dfrac{4EGM}{c^2},\quad 2\lambda (c) = \dfrac{6G^2 M^2}{c^2},
\end{equation}
where $G$ is gravitational constant and $M$ is the mass of the black hole. Moreover, physically meaningful solutions have approximate energy (in the sense of \eqref{eq-energiaLC}) still equal to $E$. As a consequence, we are led to the study of the problem
\begin{equation} \label{eq-LCfinale}
\begin{cases}
\, m \ddot x = -\left(mGM + \dfrac{4EGM}{c^2}\right) \dfrac{x}{\vert x\vert ^{3}} - \dfrac{6G^2 M^2}{c^2} \dfrac{x}{\vert x\vert ^4} + \varepsilon \,\nabla U(x),
\vspace{5pt}\\
\, \dfrac{1}{2} m \vert \dot{x}\vert ^2 -\left(mGM + \dfrac{4EGM}{c^2}\right) \dfrac{1}{\vert x\vert }-\dfrac{3G^2 M^2}{c^2} \dfrac{1}{\vert x\vert ^2} - \varepsilon \, U(x)=E.
\end{cases}
\end{equation}
Periodic solutions of this problem can be directly obtained from an application of Theorem~\ref{teo-levicivita}, provided the parameter $\kappa (E,c)$ given in \eqref{eq-kappalambda} is positive, i.e.~when $E>-mc^2/4$. Indeed, fixed any energy $E$ in this range, Theorem~\ref{teo-levicivita} yields solutions of any energy $E' < 0$ and so, in particular, of energy $E$. We then have the following result.

\begin{corollary}\label{cor-levi}
Let $E\in (-mc^2/4,0)$ and $N\in \mathbb{N}$. Then, for every $\varsigma>0$ there exists $\varepsilon^{*}=\varepsilon^{*}(N,\varsigma)>0$ such that for every $\varepsilon\in \mathbb{R}$, with $\vert \varepsilon\vert \leq \varepsilon^{*}$, problem \eqref{eq-LCfinale} has at least $2N$ periodic solutions $x^i_{1},\ldots, x^i_N$, $i=1,2$.

Moreover, for every $j=1,\ldots, N$ and $i=1,2$, the solution $x^i_j$ has minimal period $T_{i,j}$ satisfying
\begin{equation*}
\left\vert T_{i,j}-\dfrac{\pi \kappa (E,c)\sqrt{m}}{\sqrt{2} (-E)^{3/2}}\right\vert <\varsigma
\end{equation*}
and winding number in its minimal period
\begin{equation*}
\left\lfloor\dfrac{\sqrt{-2E}}{\kappa (E,c)}\, \sqrt{2\lambda(E,c) +\dfrac{\kappa (E,c)^2}{-2E}}\right\rfloor+j.
\end{equation*}
\end{corollary}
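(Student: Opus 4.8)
The plan is to read problem \eqref{eq-LCfinale} as a mere specialization of equation \eqref{eq-levicivitacompl} and then invoke Theorem~\ref{teo-levicivita} directly. First I would fix $E \in (-mc^2/4,0)$ once and for all and freeze the coefficients at the values dictated by \eqref{eq-kappalambda}, namely $\kappa = \kappa(E,c) = mGM + 4EGM/c^2$ and $\lambda = \lambda(c) = 3G^2M^2/c^2$. With these choices the first line of \eqref{eq-LCfinale} is literally \eqref{eq-levicivitacompl}, while the second line is precisely the prescribed-energy condition $E_\varepsilon(x,\dot x) = E$ in the notation of \eqref{eq-energiaLC}. Hence a solution of \eqref{eq-LCfinale} is exactly a solution of \eqref{eq-levicivitacompl} of energy $H^{*} = E$, and the whole statement reduces to applying Theorem~\ref{teo-levicivita} with this value of $H^{*}$.

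The only hypotheses of Theorem~\ref{teo-levicivita} left to verify are the positivity of the two coefficients. The value $\lambda = 3G^2M^2/c^2 > 0$ holds unconditionally, whereas $\kappa = \kappa(E,c) = GM(mc^2 + 4E)/c^2$ is positive exactly when $mc^2 + 4E > 0$, that is, when $E > -mc^2/4$; this is guaranteed by the standing assumption $E \in (-mc^2/4,0)$. Moreover $H^{*} = E < 0$, so the sign requirement on the energy in Theorem~\ref{teo-levicivita} is met as well.

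Once these checks are in place, applying Theorem~\ref{teo-levicivita} with $H^{*} = E$ furnishes, for every $\varsigma > 0$, the threshold $\varepsilon^{*} = \varepsilon^{*}(N,\varsigma) > 0$ and the $2N$ distinct periodic solutions $x^i_j$ of energy $E$; substituting $H^{*} = E$, $\kappa = \kappa(E,c)$ and $\lambda = \lambda(c)$ into the period and winding-number formulas of Theorem~\ref{teo-levicivita} reproduces verbatim the two displayed expressions in the statement. I do not expect any genuine obstacle here: the corollary is a direct reading of Theorem~\ref{teo-levicivita}. The one point deserving care is the apparent circularity arising from the fact that the coefficients $\kappa(E,c)$ and $\lambda(c)$ themselves depend on $E$. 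This is harmless precisely because $E$ is fixed \emph{before} the bifurcation step is invoked, so that $\kappa(E,c)$ and $\lambda(c)$ are honest positive constants and the prescribed energy level $H^{*} = E$ is imposed independently; with $E$ frozen, Theorem~\ref{teo-levicivita} produces solutions of \emph{every} negative energy, in particular of energy $E$, which is what the construction requires.
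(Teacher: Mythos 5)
Your proposal is correct and matches the paper's own argument: the paper likewise fixes $E\in(-mc^2/4,0)$ so that $\kappa(E,c)>0$ (and $\lambda(c)>0$), and then applies Theorem~\ref{teo-levicivita} with $H^{*}=E$, noting exactly as you do that since the theorem yields solutions of any negative energy once the coefficients are frozen, it yields in particular solutions of energy $E$. Your remark on the harmlessness of the coefficients' dependence on $E$ is precisely the point the paper makes in the sentence preceding the corollary.
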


\subsection{The Kepler problem with relativistic differential operator} \label{subsec-keplero}

In this section we deal with the problem
\begin{equation}\label{eq-kepleropert}
\dfrac{\mathrm{d}}{\mathrm{d}t}\left(\dfrac{m\dot{x}}{\sqrt{1-\vert \dot{x}\vert ^2/c^2}}\right)=-\alpha\, \dfrac{x}{\vert x\vert ^3}+\varepsilon \, \nabla U(x), \qquad x \in \mathbb{R}^2 \setminus \{0\},
\end{equation}
where $m>0$, $\alpha>0$, $U \in \mathcal{C}^{\infty}(\mathbb{R}^2,\mathbb{R})$, and $\varepsilon \in \mathbb{R}$. Notice that in the physical interpretation of \eqref{eq-kepleropert} the constant $\alpha$ is equals to $mGM$, where $M$ is again the mass of the attracting body.

We first recall that for solutions of \eqref{eq-kepleropert}
an energy conservation law is fulfilled: indeed, every solution $x$ of \eqref{eq-kepleropert} satisfies
\begin{equation} \label{eq-energiakeplero}
E_\varepsilon(x,\dot{x})=mc^2 \left( \frac{1}{\sqrt{1-\vert \dot{x}\vert ^2/c^2}}-1\right)-\dfrac{\alpha}{\vert x\vert }-\varepsilon \, U(x)=H,
\end{equation}
for some $H\in \mathbb{R}$. 

Now, we are in a position to state and prove our result for \eqref{eq-kepleropert}.

\begin{theorem}\label{teo-keplero}
Let $H^{*}\in (-mc^2,0)$ and $N\in \mathbb{N}$. Then, for every $\varsigma>0$ there exists $\varepsilon^{*}=\varepsilon^{*}(N,\varsigma)>0$ such that for every $\varepsilon\in \mathbb{R}$, with $\vert \varepsilon\vert \leq \varepsilon^{*}$, equation \eqref{eq-kepleropert} has at least $2N$ periodic solutions $x^i_{1},\ldots, x^i_N$, $i=1,2$,	of energy $H^{*}$.

Moreover, for every $j=1,\ldots, N$ and $i=1,2$, the solution $x^i_j$ has minimal period $T_{i,j}$ satisfying
\begin{equation*}
\left\vert T_{i,j}-\dfrac{2\pi m^2 c^3}{(-2 mc^2 H^{*}-(H^{*})^2)^{3/2}}\right\vert <\varsigma
\end{equation*}
and winding number $\lfloor mc^2/(H^{*}+mc^2) \rfloor+j$ in its minimal period.
\end{theorem}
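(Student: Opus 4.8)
The plan is to mirror the strategy already employed for the Levi-Civita problem in Section~\ref{subsec-levicivita}: realize \eqref{eq-kepleropert} as a nearly integrable Hamiltonian system, construct action-angle coordinates for the unperturbed problem through a phase-plane analysis of the radial motion, verify the isoenergetic non-degeneracy condition \eqref{eq-kamiso}, and finally invoke Theorem~\ref{teo-weinstein}. First I would perform the Legendre transform of the relativistic Lagrangian: from the conjugate momentum $p = m\dot x/\sqrt{1-\vert\dot x\vert^2/c^2}$ one inverts to $\vert\dot x\vert^2 = c^2\vert p\vert^2/(m^2c^2+\vert p\vert^2)$, obtaining the Hamiltonian $\mathcal{H}_\varepsilon(x,p) = c\sqrt{m^2c^2+\vert p\vert^2} - \alpha/\vert x\vert - \varepsilon\,U(x)$. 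Since this differs from the energy \eqref{eq-energiakeplero} by the constant $mc^2$, I would set $\mathcal{E} = H+mc^2$, so that prescribing energy $H^*$ means prescribing the Hamiltonian level $\mathcal{E}^*=H^*+mc^2$, and the hypothesis $H^*\in(-mc^2,0)$ becomes $0<\mathcal{E}^*<mc^2$, exactly the regime of bounded motion. The second integral is the angular momentum $\langle x, Jp\rangle$ (with $J$ as in \eqref{eq-momanglevi}), independent of and in involution with $\mathcal{H}_0$.

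Passing to polar coordinates $x=re^{i\vartheta}$ and writing $\vert p\vert^2 = p_r^2 + L^2/r^2$, the level relation $\mathcal{H}_0=\mathcal{E}$ yields $c^2 p_r^2 = (\mathcal{E}+\alpha/r)^2 - m^2c^4 - c^2L^2/r^2$, a quadratic $A + 2B/r + C/r^2$ in $1/r$ with $A=\mathcal{E}^2-m^2c^4$, $B=\mathcal{E}\alpha$, $C=\alpha^2-c^2L^2$. Closed orbits in the $(r,p_r)$-plane occur precisely when $A<0$ and this quadratic has two positive roots, which I would show holds exactly for $\alpha/c < L < L_{\max}$ with $L_{\max}=mc\alpha/\sqrt{m^2c^4-\mathcal{E}^2}$; this delineates the admissible set of pairs $(H,L)$. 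By the standard loop-integral computations (substituting $u=1/r$ and using $\int_{u_-}^{u_+}(Cu^2+2Bu+A)^{-1/2}\,du = \pi/\sqrt{-C}$ together with its companions) the apsidal angle is $\Theta(H,L) = 2\pi cL/\sqrt{c^2L^2-\alpha^2}$, while the radial period depends only on $H$ and, once the integral is evaluated, takes the form displayed in the statement. I would then observe that $\Theta(H^*,\cdot)$ is strictly decreasing, with $\Theta(H^*,L)\to+\infty$ as $L\to(\alpha/c)^+$ and $\Theta(H^*,L_{\max})=2\pi mc^2/(H^*+mc^2)$; hence for each integer $k_j^*=\lfloor mc^2/(H^*+mc^2)\rfloor+j$, $j=1,\dots,N$, there is a unique $L_j^*\in(\alpha/c,L_{\max})$ with $\Theta(H^*,L_j^*)=2\pi k_j^*$, giving $N$ distinct periodic unperturbed orbits with the prescribed winding numbers and common period $T(H^*)$.

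Next I would construct action-angle coordinates exactly as in \eqref{eq-defaaLC}, with $I_2=L$ and $I_1 = I_r + L$, where $I_r=\tfrac{1}{2\pi}\oint p_r\,dr$. Evaluating this loop integral produces the Sommerfeld-type expression $I_r = \mathcal{E}\alpha/(c\sqrt{m^2c^4-\mathcal{E}^2}) - \sqrt{L^2-\alpha^2/c^2}$; inverting for $\mathcal{E}$ gives the explicit Hamiltonian
\[
\mathcal{K}_{0}(I_1,I_2) = mc^2\Bigl(1 + \tfrac{\alpha^2/c^2}{\beta(I_1,I_2)^2}\Bigr)^{-1/2} - mc^2, \qquad \beta(I_1,I_2) = I_1 - I_2 + \sqrt{I_2^2 - \alpha^2/c^2}.
\]
The key remark for the non-degeneracy is that $\mathcal{K}_{0}=F(\beta)$ depends only on the single combination $\beta$. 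Indeed, the vector $v=(\partial_2\mathcal{K}_{0},-\partial_1\mathcal{K}_{0}) = F'(\beta)\,(\partial_2\beta,-\partial_1\beta)$ is, up to the nonzero factor $F'(\beta)$, the symplectic gradient of $\beta$, hence orthogonal to $\nabla\beta$; consequently the rank-one term $F''(\beta)\,\nabla\beta\otimes\nabla\beta$ of $\nabla^2\mathcal{K}_{0}$ contributes nothing, and condition \eqref{eq-kamiso} collapses to $F'(\beta)^3\,\langle\nabla^2\beta\,v,v\rangle\neq 0$. Since $\partial_1\beta=1$ is constant, a one-line computation gives $\langle\nabla^2\beta\,v,v\rangle = \partial_{22}\beta = -(\alpha^2/c^2)(I_2^2-\alpha^2/c^2)^{-3/2}$, which is nonzero throughout the admissible region $I_2=L>\alpha/c$; as $F'(\beta)\neq0$ as well, \eqref{eq-kamiso} holds at every relevant point.

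Finally I would assemble the pieces: for each $j$ set $(I_j^*,\varphi_j^*)=\Pi(H^*,L_j^*)$ and apply Theorem~\ref{teo-weinstein} (with $n=2$, so $m=2$) to obtain, for $\vert\varepsilon\vert$ small, two distinct periodic solutions of energy $H^*$ with minimal period close to $T(H^*)$; continuity of the winding number (as in the proof of \cite[Theorem~3.2]{BoDaFe-21}) preserves the value $k_j^*$, and taking the minimum of the thresholds over $j$ yields the $2N$ solutions. The main obstacle lies in the two genuinely relativistic computations, namely evaluating the radial loop integrals for $T$, $\Theta$ and $I_r$ through the square-root Hamiltonian $c\sqrt{m^2c^2+\vert p\vert^2}$ and inverting the action to obtain the explicit $\mathcal{K}_{0}$. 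By contrast, the non-degeneracy, which at first threatens a lengthy bordered-Hessian computation, is disarmed by the functional form $\mathcal{K}_{0}=F(\beta)$, so that the entire condition reduces to the single inequality $\partial_{22}\beta\neq0$, the precise imprint of the relativistic correction (playing here the role that $\lambda$ plays in the Levi-Civita case).
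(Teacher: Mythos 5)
Your proposal is correct and follows the same overall skeleton as the paper's proof: Hamiltonian formulation with the relativistic momentum, phase-plane analysis of the radial motion to identify the admissible set $\Lambda$ and the values $L^{*}_j$ with $\Theta(H^{*},L^{*}_j)=2\pi k^{*}_j$, action-angle coordinates, verification of \eqref{eq-kamiso}, and then Theorem~\ref{teo-weinstein} plus continuity of the winding number. Your explicit $\mathcal{K}_{0}=mc^2\bigl(1+\alpha^2/(c^2\beta^2)\bigr)^{-1/2}-mc^2$ is exactly the Hamiltonian \eqref{eq-hamactang}: the denominator there equals $\sqrt{\beta^2+\alpha^2/c^2}$ with your $\beta=I_1-I_2+\sqrt{I_2^2-\alpha^2/c^2}$. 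Where you genuinely diverge is the non-degeneracy check. The paper simply invokes the negative definiteness of $\nabla^2\mathcal{K}_{0}$ established in \cite[Section~3.2]{BoDaFe-21} (mirroring the Levi-Civita case, where the full bordered-Hessian computation is carried out); you instead exploit the structure $\mathcal{K}_{0}=F(\beta)$, note that the test vector $(\partial_2\mathcal{K}_{0},-\partial_1\mathcal{K}_{0})$ is orthogonal to $\nabla\beta$ so the rank-one part $F''\,\nabla\beta\otimes\nabla\beta$ drops out, and reduce \eqref{eq-kamiso} to $F'(\beta)^3\,\partial_{22}\beta\neq 0$ with $\partial_{22}\beta=-(\alpha^2/c^2)(I_2^2-\alpha^2/c^2)^{-3/2}<0$. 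This is self-contained, shorter than a definiteness argument, and makes transparent that the condition fails exactly in the classical limit ($\alpha/c\to 0$), which is a nice payoff; it would also transfer verbatim to the Levi-Civita Hamiltonian, which has the same form $F(\gamma)$. Two minor bookkeeping points: your intermediate line has the factors of $F'$ slightly misplaced (with $v=F'(\beta)w$ one gets $\langle\nabla^2\beta\,v,v\rangle=F'(\beta)^2\partial_{22}\beta$, so the total is $F'(\beta)^3\langle\nabla^2\beta\,w,w\rangle$ with $w=(\partial_2\beta,-\partial_1\beta)$ — the conclusion is unaffected); and the evaluation of the loop integrals for $T$, $\Theta$ and $I_r$ is asserted rather than carried out, but the paper does the same by citing \cite{BoDaFe-21}, and the stated formulas are the correct (Sommerfeld) ones.
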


\begin{proof}
The proof is an application of Theorem~\ref{teo-weinstein} and it is based on the discussion and results in \cite{BoDaFe-21}. Please notice that in \cite{BoDaFe-21} the energy was defined by
\begin{equation*}
\dfrac{mc^2}{\sqrt{1-\vert \dot{x}\vert ^2/c^2}}-\dfrac{\alpha}{\vert x\vert }-\varepsilon \, U(x),
\end{equation*}
thus implying that it differs from the energy $E_\epsilon$ defined in \eqref{eq-energiakeplero} by the additive factor $mc^2$. As a consequence, the formulas in \cite{BoDaFe-21} involving the energy have to be modified taking into account this additive factor.

For the proof, we adopt the same strategy of the proof of Theorem~\ref{teo-levicivita} and we outline the various steps, by pointing out the possible differences.

We first recall (cf.~\cite[Section~2]{BoDaFe-21}) that \eqref{eq-kepleropert} can be written as a Hamiltonian system, with respect to the variables $(x,p)$, where 
\begin{equation*}
p=\dfrac{m\dot{x}}{\sqrt{1-\dfrac{\vert \dot{x}\vert ^2}{c^2}}}.
\end{equation*}
The Hamiltonian is given by
\begin{equation*}
\mathcal{H}_\varepsilon(x,p)=mc^2 \left( \sqrt{1+\dfrac{\vert p\vert ^2}{m^2c^2}} -1 \right)-\dfrac{\alpha}{\vert x\vert }-\varepsilon \, U(x)
\end{equation*}
and the angular momentum $\mathcal{L}_{0}$ is again defined by \eqref{eq-momanglevi}. 

Action-angles variables are defined as in the previous case, first introducing the change of variables 
\begin{equation*}
\Psi \colon \Omega\to (\mathbb{R}^2\setminus\{0\})\times \mathbb{R}^2,
\qquad
\Psi (r,\vartheta,l,\Phi)=\left(re^{i\vartheta},le^{i\vartheta}+\dfrac{\Phi}{r}ie^{i\vartheta}\right),
\end{equation*}
being $\Omega=(0,+\infty)\times \mathbb{T}^1 \times \mathbb{R}^2$.
Then, defining
\begin{equation*}
\Lambda =\left\{(H,L)\in \mathbb{R}^{2} \colon -mc^2<H<0, \, \dfrac{\alpha^2}{c^2} < L^2 < \dfrac{\alpha^2 m^2 c^2}{-2m c^2 H-H^2} \right\},
\end{equation*}
for every $(H,L)\in \Lambda$ the orbit in the $(r,l)$-plane is closed. We point out that in this case the radial period map and the apsidal angle are given by
\begin{equation} \label{eq-defperiodoK}
T(H,L)=\dfrac{2\pi m^2 c^3}{(-2m c^2 H-H^2)^{3/2}},
\end{equation}
and
\begin{equation} \label{eq-defthetaK}
\Theta (H,L)=\dfrac{2\pi}{\sqrt{1-\dfrac{\alpha^2}{c^2L^2}}},
\end{equation}
for every $(H,L)\in \Lambda$, respectively.

Proceeding as in the case of the Levi-Civita potential, we introduce action-angles variables $(I_{1},I_{2},\varphi_{1},\varphi_{2})$ and in \cite[Section~2.4]{BoDaFe-21} it is proved that in the new variables the Hamiltonian corresponding to the unperturbed case $\varepsilon=0$ is given by 
\begin{equation}\label{eq-hamactang}
\mathcal{K}_{0}(I_{1},I_{2}) = mc^2 \dfrac{I_{1} - I_{2} + \dfrac{1}{c}\sqrt{c^2I_{2}^2 - \alpha^2}}{\sqrt{(I_{1}-I_{2})^2 + I_{2}^2+\dfrac{2}{c}(I_{1}-I_{2}) \sqrt{c^2 I_{2}^2 - \alpha^2}}}-mc^2.
\end{equation}

Now, let us fix $H^{*}\in (-mc^2,0)$, $N\in \mathbb{N}$, $j\in \{1,\ldots N\}$ and let us set $k^{*}_j=\lfloor mc^2/(H^{*}+mc^2)\rfloor+j$. Then, there exists a unique $L^{*}_j>0$ such that
\begin{equation} \label{eq-pr301}
\sqrt{1-\dfrac{\alpha^2}{c^2(L^{*}_j)^2}}=\dfrac{1}{k^{*}_j}.
\end{equation}
A simple computation shows that $(H^{*},L^{*}_j)\in \Lambda$. Moreover, recalling \eqref{eq-defthetaK}, condition \eqref{eq-pr301} implies that
\begin{equation*}
\Theta (H^{*},L^{*})=2\pi  k^{*}_j
\end{equation*}
and then the solution of the unperturbed problem corresponding to the pair $(H^{*},L^{*}_j)$ is periodic of period $2\pi m^2 c^3/(-2 mc^2 H^{*}-(H^{*})^2)^{\frac{3}{2}}$ (recall \eqref{eq-defperiodoK}) and it has winding number $k^{*}_j$ in its period. 

Let us now denote by $I^{*}_j$ the pair of actions associated to $(H^{*},L^{*}_j)$ via the map $\Pi$ defined in \eqref{eq-defaaLC}. The validity of \eqref{eq-kamiso}, with $\mathcal{K}_{0}$ as in \eqref{eq-hamactang}, follows from the fact that $\nabla^2 \mathcal{K}_{0} (I^{*}_j)$ is negative defined (cf.~\cite[Section~3.2]{BoDaFe-21}).

We then conclude as in the proof of Theorem~\ref{teo-levicivita}.
\end{proof}

\begin{remark}\label{rem-3.2}
Let us notice that, analogously as in \cite{BoDaFe-21}, one could also consider values $(H^{*},L^{*})$ giving rise to periodic solutions of the unperturbed problem with minimal period $nT(H^{*},L^{*})$ with $n>1$ (this happens if and only if $\Theta(H^{*},L^{*}) \in 2\pi (\mathbb{Q} \setminus \mathbb{Z})$).
The non-degeneracy condition can be verified in a similar manner and thus a more general version of Theorem~\ref{teo-keplero} could be proved. The same remark is also valid for the Levi-Civita problem discussed in Section~\ref{subsec-levicivita}.
\hfill$\lhd$
\end{remark}

\bibliographystyle{elsart-num-sort}
\bibliography{BoDaFe-biblio}

\end{document}